\documentclass[11pt,a4paper,reqno]{amsart}

\usepackage{epsfig}
\usepackage{amsfonts}
\usepackage{amsmath}
\usepackage{amssymb}
\usepackage{amsthm}
\usepackage{mathtools}
\usepackage{color}
\usepackage{enumerate}
\usepackage[T1]{fontenc}
\usepackage[latin1]{inputenc}
\usepackage{ae,aecompl}
\usepackage{graphicx}
\usepackage{nicefrac}
\usepackage[includehead,includefoot,margin=25mm]{geometry}
\usepackage{faktor}
\usepackage{dsfont}
\usepackage[all]{xy}
\usepackage{mathrsfs}
\usepackage{textcomp}
\usepackage{cite}

\linespread{1.0}

\newtheorem*{theorem*}{Theorem}

\newtheorem*{lemma*}{Lemma}

\newtheorem*{proposition*}{Proposition}

{\theoremstyle{remark}
  
  \newtheorem*{remark*}{Remark}}
{\theoremstyle{definition}

  \newtheorem*{example*}{Example}

}

\newcommand{\CC}[0]{\ensuremath{\mathbb{C}}}
\newcommand{\ZZ}[0]{\ensuremath{\mathbb{Z}}}

\newcommand{\GM}[0]{\ensuremath{\mathbb{G}_{\mathrm{m}}}}
\newcommand{\AF}[0]{\ensuremath{\mathbb{A}}}

\newcommand{\QQ}[0]{\ensuremath{\mathbb{Q}}}
\newcommand{\TT}[0]{\ensuremath{\mathrm{T}}}
\newcommand{\KK}[0]{\ensuremath{\mathbb{C}}}

\newcommand{\spec}[0]{\ensuremath{\operatorname{Spec}}}

\begin{document}

\title{Uniformly rational varieties with torus action}

\author{Alvaro Liendo} %
\address{Instituto de Matem\'atica y F\'\i sica, Universidad de Talca,
  Casilla 721, Talca, Chile.}%
\email{aliendo@inst-mat.utalca.cl}

\author{Charlie Petitjean} %
\address{Instituto de Matem\'atica y F\'\i sica, Universidad de Talca,
  Casilla 721, Talca, Chile.}%
\email{petitjean.charlie@gmail.com}

\date{\today}

\thanks{{\it 2000 Mathematics Subject
    Classification}:  14E08; 14R20.  \\
  \mbox{\hspace{11pt}}{\it Key words}: Uniformly rational varieties,
  T-varieties, torus actions, algebraic quotient.\\
  \mbox{\hspace{11pt}}This research was partially supported by
  projects Fondecyt regular 1160864 and Fondecyt postdoctorado
  3160005}

\begin{abstract}
  A smooth variety is called uniformly rational if every point admits
  a Zariski open neighborhood isomorphic to a Zariski open subset of
  the affine space. In this note we show that every smooth and
  rational affine variety endowed with an algebraic torus action such
  that the algebraic quotient has dimension 0 or 1 is uniformly
  rational.
\end{abstract}

\maketitle

\section*{Introduction}

In the seminal paper \cite{G}, Gromov discusses the class of algebraic
varieties $X$ having the property that every point admits a Zariski
open neighborhood isomorphic to a Zariski open subset of the affine
space. These varieties are now called uniformly rational varieties
\cite{B-Bo}. An uniformly rational variety is clearly smooth and
rational. Furthermore, in dimensions $1$ and $2$, every smooth and
rational variety is uniformly rational. In higher dimension, it is an
open question whether all smooth and rational varieties are uniformly
rational.

Several families of uniformly rational varieties are known. For
instance, rational varieties that are homogeneous with respect to the
action of an algebraic group are uniformly rational. Smooth toric
varieties are also uniformly rational. Furthermore, blow-ups along
smooth centers of uniformly rational varieties are uniformly rational.

In this note we deal with smooth varieties endowed with torus actions
that are not necessarily toric. We work over the field of complex
numbers $\KK$. Let $\TT$ be the $n$-dimensional algebraic torus
$\GM^n$, where $\GM$ is the multiplicative group. The complexity of a
$\TT$-action is the codimension of a generic orbit. Since the quotient
of an algebraic torus by a normal algebraic subgroup is again an
algebraic torus, up to changing the torus, we can always assume that a
$\TT$-action is faithful. A $\TT$-variety is a normal variety endowed
with a faithful $\TT$-action. Hence, toric varieties correspond to
$\TT$-varieties of complexity zero.

As stated before, all smooth toric varieties are uniformly
rational. This follows from the fact that every smooth toric variety
without torus factor admits a $\TT$-equivariant open cover by affine
charts isomorphic to the affine space. Furthermore, it follows in a
straightforward way from \cite[Chapter 4]{KKMSD} that smooth and
rational $\TT$-varieties of complexity one are also uniformly
rational, Moreover, if $X$ is also complete then $X$ also admits a
admits a $\TT$-equivariant open cover by affine charts isomorphic to
the affine space, see \cite{A-Pe-S}.

In higher complexity the situation is less clear. It is not known
whether all smooth and rational $\TT$-varieties of complexity two or
higher are uniformly rational, but the second author has provided
counterexamples to an equivariant version of uniform rationality. A
$G$-variety $X$ is said to be equivariantly uniformly rational if it
admits an $G$-invariant open cover by open sets $G$-equivariantly
isomorphic to $G$-invariant open sets of the affine space endowed with
a $G$-action. In \cite{P}, the author has given examples of affine
$\TT$-varieties of complexity two and higher that are not
equivariantly uniformly rational.

Nevertheless, we prove that a large class of affine $\TT$-varieties
are uniformly rational. Indeed, we show that a smooth and rational
affine $\TT$-variety $X$ of any complexity is uniformly rational,
provided that the algebraic quotient is of dimension at most
one. Recall that the algebraic quotient
$X/\!/\TT:=\spec\left(\KK[X]^\TT\right)$ is the spectrum of the ring
of invariant regular function. More precisely, we will prove the
following theorem.

\begin{theorem*} Let $X$ be a smooth and rational affine
  $\TT$-variety.
  \begin{enumerate}
  \item If the algebraic quotient $X/\!/\TT$ is a point then $X$ is
    equivariantly isomorphic to $(\KK^{*})^{l}\times\AF^{n-l}$. In
    particular, $X$ is uniformly rational.
  \item If the algebraic quotient $X/\!/\TT$ is a curve then $X$ is
    uniformly rational.
  \end{enumerate}
\end{theorem*}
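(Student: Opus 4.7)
The plan is to handle the two parts separately. Part (1) reduces to the classification of smooth affine toric varieties, while part (2) is proved by a local construction based on the Altmann--Hausen description of complexity-one $\TT$-varieties combined with a torus-enlargement trick that reduces matters to the toric case of part (1).

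\emph{Part (1).} The condition $X/\!/\TT=\spec\KK$ says $\KK[X]^\TT=\KK$. Since for a reductive group action on an affine variety the fraction field of the invariants coincides with the invariant field, one also has $\KK(X)^\TT=\KK$, i.e.\ the complexity of the $\TT$-action is zero and a generic orbit is dense in $X$. Faithfulness of the action then yields $\dim\TT=\dim X$, so $X$ is a smooth affine toric variety. The classical cone description, combined with the smoothness criterion (the cone is generated by part of a $\ZZ$-basis of the cocharacter lattice), gives $X\cong(\KK^{*})^{l}\times\AF^{n-l}$, which is visibly an open subset of $\AF^{n}$ and so uniformly rational.

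\emph{Part (2).} One first checks that $C:=X/\!/\TT$ is an open subset of $\AF^{1}$: smoothness of $C$ holds since $\KK[C]=\KK[X]^\TT$ is normal (invariants of a normal ring under a reductive group action), and rationality follows from L\"uroth's theorem applied inside the purely transcendental field $\KK(X)$ to the transcendence-degree-one subfield $\KK(C)$. By the Altmann--Hausen correspondence, $X$ is then described by a proper polyhedral divisor $\DD=\sum_{c\in C}\Delta_{c}\otimes c$ on $C$ with common tail cone $\sigma$, where $\Delta_{c}=\sigma$ outside a finite set. Smoothness of $X$ over the generic point of $C$ forces $\sigma$ to be a smooth cone.

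Given $x\in X$ with image $c=\pi(x)\in C$, shrink $C$ to a Zariski open $C'\subset\AF^{1}$ containing $c$ and on which $\DD$ is supported at most at $c$, and let $U:=\pi^{-1}(C')$. This is a $\TT$-invariant affine open neighborhood of $x$. Now form the affine $\TT$-variety $\tilde{U}$ over all of $\AF^{1}$ associated to the polyhedral divisor $\Delta_{c}\otimes\{c\}$; by functoriality of the Altmann--Hausen construction with respect to open base change, $U$ embeds into $\tilde{U}$ as an open subvariety. The scaling $\GM$-action on $\AF^{1}$ fixes $c$ and therefore lifts to $\tilde{U}$, commuting with $\TT$. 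The resulting $\TT\times\GM$-action on the $n$-dimensional variety $\tilde{U}$ is faithful and has complexity zero, so $\tilde{U}$ is a toric variety for $\TT\times\GM$. Smoothness of $X$ at $x$, smoothness of $\sigma$, and the trivial divisor behavior outside $c$ together should force smoothness of $\tilde{U}$ everywhere. Applying part (1) to $\tilde{U}$ then gives $\tilde{U}\cong(\KK^{*})^{l}\times\AF^{n-l}$, and so $U$, being open in $\tilde{U}$, is an open subset of $\AF^{n}$. Covering $X$ by such neighborhoods yields uniform rationality.

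The main obstacle is verifying the global smoothness of $\tilde{U}$: one must show that smoothness of $X$ at the single point $x$, together with the trivial behavior of $\DD$ outside $c$ and smoothness of the tail cone $\sigma$, suffices to ensure smoothness of $\tilde{U}$ at every point of its special fibre over $c$. This is where the combinatorial smoothness criteria for complexity-one $\TT$-varieties enter; once the $\TT\times\GM$-equivariance is exploited to promote smoothness at $x$ to smoothness across the special fibre, the rest of the argument is essentially formal.
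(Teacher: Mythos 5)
There is a genuine gap, and it occurs at the same conceptual point in both parts: you conflate the dimension of the algebraic quotient $X/\!/\TT$ with the complexity of the action. The claim in Part (1) that for a reductive group action the fraction field of the invariant ring coincides with the field of invariant rational functions is false: for $\GM$ acting on $\AF^{2}$ by $\lambda\cdot(x,y)=(\lambda x,\lambda y)$ one has $\KK[x,y]^{\GM}=\KK$, so the algebraic quotient is a point, while $\KK(x,y)^{\GM}=\KK(x/y)$ and the complexity is one. In general $\dim X/\!/\TT$ equals the complexity only when the action is hyperbolic (weight cone equal to $M_\QQ$), as the paper itself records. Consequently, in Part (1) the conclusion ``$\dim\TT=\dim X$, so $X$ is toric'' fails already for $\AF^{n}$ with the diagonal $\GM$-scaling, and in Part (2) the Altmann--Hausen description you invoke, with a proper polyhedral divisor on the curve $C=X/\!/\TT$, is only available when the complexity is one; the paper's own example is a complexity-two $\TT$-variety whose quotient is $\AF^{1}$, so your local construction simply does not apply to it. On top of this, you explicitly leave unverified the global smoothness of $\widetilde U$, so even in the genuinely hyperbolic complexity-one case the argument is incomplete as written (although there your toric-downgrade idea, a divisor supported at a single point of $\AF^{1}$ giving a $\TT\times\GM$-toric variety, is sound in spirit and close to the classical treatment of complexity one).

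The missing idea, which is the heart of the paper, is the canonical splitting $\TT=\TT_1\times\TT_2$ determined by the maximal linear subspace of the weight cone: $\TT_1$ acts fix-pointed on $X$, $\TT_2$ acts hyperbolically on $X_H=X/\!/\TT_1$, and $\pi$ factors through $X_H$. The Kambayashi--Russell theorem then exhibits $X$ as a vector bundle over the smooth variety $X_H$. When $X/\!/\TT$ is a point, the hyperbolic $\TT_2$-action on $X_H$ has complexity zero, so $X_H\simeq(\KK^{*})^{a}\times\AF^{b}$, and Quillen--Suslin together with Swan's extension to Laurent polynomial rings trivializes the bundle, giving (1). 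When $X/\!/\TT$ is a curve, $X_H$ is a smooth rational complexity-one $\TT$-variety, hence uniformly rational, and a vector bundle over a uniformly rational variety is uniformly rational, giving (2). Without this fix-pointed/hyperbolic reduction, your argument cannot reach the statement in its stated generality.
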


\section*{Proof of the result}

Let $N\simeq\ZZ^k$ be a lattice of rank $k$, and let
$M=\operatorname{Hom}(N,\ZZ)$ be its dual lattice. We let
$\TT=\spec(\KK[M])$ be the algebraic torus of dimension $k$. This
ensures that $M$ is character lattice of $\TT$ and $N$ is the
$1$-parameter subgroup lattice of $\TT$. We also let $M_\QQ$ be the
$\QQ$-vector space $M\otimes_\ZZ\QQ$ and $N_\QQ$ be the $\QQ$-vector
space $N\otimes_\ZZ\QQ$. There is a natural duality pairing
$\langle\cdot,\cdot\rangle:M_\QQ\times N_\QQ\rightarrow \QQ$.

We consider now an algebraic affine $\TT$-variety $X=\spec A$. The
$\TT$-action on $X$ corresponds to an $M$-grading of the ring $A$ of
regular functions where for every $u\in M$, the homogeneous piece
$A_u\subset A$ is given by the semi-invariant functions with respect
to the character $\chi^{u}$, i.e.,
$$A=\bigoplus_{u\in M} A_u\qquad\mbox{with}\qquad A_u=
\{f\in A\mid \lambda.f=\chi^u(\lambda)\cdot f\}\,.$$

The weight monoid $S$ attached to the $\TT$-action on $X$ corresponds
to all element $u\in M$ such that $A_{u}\neq\{0\}$. The cone spanned
by $S$ in the vector space $M_\QQ$ is called the weight cone of the
$\TT$-action and we denote it by $\omega$.

Our proof of the theorem is a consequence of a canonical factorization
of the quotient morphism $\pi:X\rightarrow X/\!/\TT=\spec A_0$ that
may be of independent interest. We state this factorization in the
following proposition. First, we need some definitions.

A $\TT$-action is said to be fix-pointed if the only vector space
contained in the weight cone $\omega$ is $\{0\}$. This is the case if
and only if algebraic quotient $X/\!/\TT$ is isomorphic to the fixed
point locus $X^{\TT}$ via the composition
$X^\TT\xhookrightarrow{\hspace*{1.5em}}
X\stackrel{\pi}{\xrightarrow{\hspace*{1.5em}}} X/\!/\TT$.
A $\TT$-action is said to be hyperbolic if the weight cone $\omega$ is
the whole $M_\QQ$. This is the case if and only if the dimension of
the algebraic quotient coincides with the complexity of the
$\TT$-action.

\begin{proposition*}
  For every $\TT$-variety $X$ there is an unique splitting of the
  acting torus $\TT=\TT_1\times \TT_2$ such that
  \begin{enumerate}
  \item The action of $\TT_1$ on $X$ is fix-pointed.
  \item The torus $\TT_2$ acts (non necessarily faithfuly) on
    $X_H:=X/\!/\TT_1$ and this action is hyperbolic.
  \item The quotient morphism $\pi:X\rightarrow X/\!/\TT$ factorizes
    as
    \[ X\stackrel{/\!/\TT_{1}}{\xrightarrow{\hspace*{3em}}} X_H
    \stackrel{/\!/\TT_{2}}{\xrightarrow{\hspace*{3em}}}
    X_H/\!/\TT_2\simeq X/\!/\TT\,.
    \]
  \end{enumerate}
\end{proposition*}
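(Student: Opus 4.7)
The plan is to read the splitting directly off the lineality structure of the weight cone $\omega\subset M_\QQ$. I would first let $L:=\omega\cap(-\omega)$ be the maximal linear subspace contained in $\omega$. Since $\omega$ is rational polyhedral, $L$ is rational, so $M_2:=L\cap M$ is a saturated sublattice of $M$. Choosing a complement $M_1\subset M$ with $M=M_1\oplus M_2$ yields a splitting $\TT=\TT_1\times\TT_2$ where $\TT_i=\spec(\KK[M_i])$; the task is then to verify that this splitting satisfies the three properties and is forced by them.

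For property~(1), the $\TT_1$-action on $X$ is the coarsening of the $M$-grading along the projection $M\twoheadrightarrow M_1$, so its weight cone is the image of $\omega$ in $M_\QQ/L\simeq (M_1)_\QQ$. A standard fact about convex cones is that quotienting by the lineality space produces a strictly convex cone, so $\TT_1$ acts fix-pointedly on $X$.

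For~(2) and~(3), observe that $A^{\TT_1}=\bigoplus_{u_2\in M_2}A_{0,u_2}$, so $X_H$ inherits a natural $M_2$-grading, that is, an a priori non-faithful $\TT_2$-action. Its weight monoid is $S\cap M_2$ and its weight cone is $\cone(S\cap M_2)\subset(M_2)_\QQ=L$. For every $u_2\in M_2$ one has $u_2\in L\subset\omega$, and since $S$ generates $\omega$, a suitable positive multiple $nu_2$ lies in $S\cap M_2$; hence $\cone(S\cap M_2)=L$ and $\TT_2$ is hyperbolic on $X_H$. Property~(3) is then automatic: the product structure $\TT=\TT_1\times\TT_2$ makes iterated invariants coincide with joint invariants, $A^\TT=(A^{\TT_1})^{\TT_2}$, so $X_H/\!/\TT_2\simeq X/\!/\TT$.

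For uniqueness, if $\TT=\TT_1'\times\TT_2'$ is another splitting satisfying~(1) and~(2) with induced $M=M_1'\oplus M_2'$, then fix-pointedness of $\TT_1'$ forces $L\subset(M_2')_\QQ$ (otherwise the image of $\omega$ in $M_\QQ/(M_2')_\QQ$ would contain a line), while hyperbolicity of $\TT_2'$ on $X/\!/\TT_1'$ forces $(M_2')_\QQ\subset\omega$, hence $(M_2')_\QQ\subset L$. Combining yields $M_2'=M_2$, so the canonical subtorus $\TT_1\subset\TT$ is unambiguously determined. The main technical nuance is the hyperbolicity step, where the set-theoretic inclusion $L\subset\omega$ must be upgraded to the statement $\cone(S\cap M_2)=(M_2)_\QQ$; this succeeds because $S$ generates $\omega$ as a cone, so every element of $\omega\cap M$ admits a positive multiple in $S$, and lattice points of $M_2$ produce multiples again in $M_2$.
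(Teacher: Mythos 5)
Your proposal is correct and follows essentially the same route as the paper: both isolate the maximal linear subspace $L=H$ of the weight cone, split the lattice along it (you split $M$ using saturation of $L\cap M$, the paper equivalently splits $N$ via $N_1=H^{\perp}\cap N$ and a complement), take $X_H=\spec A^{\TT_1}=\spec\bigl(\bigoplus_{u\in L\cap M}A_u\bigr)$, and read off the factorization from the inclusions $A\supseteq A_H\supseteq A_0$. Your explicit verifications of fix-pointedness, hyperbolicity (via positive multiples of lattice points of $L$ lying in the weight monoid), and uniqueness merely spell out steps the paper dismisses as ``by construction'' and ``clear''.
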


\begin{proof}
  Let $H$ be the biggest vector space contained in
  $\omega\subseteq M_\QQ$. We define the finitely generated and graded
  algebra $A_{H}=\bigoplus_{m\in H\cap M}A_{u}$ which gives us the
  algebraic variety $X_{H}:=\spec A_{H}$.  This yields a sequence of
  inclusion of algebras
  \[ A \xhookleftarrow{\hspace*{3em}} A_H \xhookleftarrow{\hspace*{3em}}
  A_0\,,
  \]
  where $A_{0}$ is the ring of invariant functions for the
  $\TT$-action and so is the algebra of functions of the algebraic
  quotient $X/\!/\TT$.

  Let $N_1=H^\bot\cap N$ be the sublattice of $N$ of vectors
  orthogonal to $H$. By definition, $N_1$ is saturated as
  sublattice. Then the quotient $N/N_{1}$ is torsion free which by
  \cite[Exercice 1.3.5.]{Cox} implies the existence of a complementary
  sublattice $N_{2}\subseteq N$ such that $N=N_{1}\bigoplus
  N_{2}$.
  Let $M_{i}$ be the dual lattice of $N_{i}$ and
  $\TT_{i}=\spec \KK[M_{i}]$ the associated algebraic torus for
  $i=1,2$. This yields a splitting $\TT=\TT_1\times \TT_2$ of the
  original torus.

  The previous sequence of inclusion of algebra provides the following
  sequence of algebraic quotients
  \[ X\stackrel{/\!/\TT_{1}}{\xrightarrow{\hspace*{3em}}} X_H
  \stackrel{/\!/\TT_{2}}{\xrightarrow{\hspace*{3em}}}
  X_H/\!/\TT_2\simeq X/\!/\TT\,.
  \]
  By construction, the action of $\TT_{1}$ on $X$ is fix-pointed and
  faithful whereas the action of $\TT_{1}$ on $X_{H}$ is
  hyperbolic. The uniqueness of such splitting is clear from the
  construction.
\end{proof}

In the proof of our theorem we need the following result directly
borrowed from \cite[Theorem~2.5]{Kam-R}. 

\begin{lemma*} With the above notation, if $X$ is smooth then $X_{H}$
  is smooth and $X$ admits a structure of a vector bundle over $X_{H}$
  where each fiber of the vector bundle is stable under the
  $\TT_{1}$-action and $\TT_{1}$ acts linearly on it.
\end{lemma*}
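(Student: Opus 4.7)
The plan is to produce the vector bundle structure directly from the $\TT_1$-weight decomposition of $A=\KK[X]$, exploiting that the fix-pointed hypothesis makes the weight monoid $S_1$ of the $\TT_1$-action pointed and identifies $X_H$ with the fixed locus $X^{\TT_1}$. For the smoothness of $X_H$, I would invoke the classical theorem that the fixed locus of a reductive group action on a smooth variety is smooth: at each fixed point $p$, the tangent space $T_pX$ is $\TT_1$-semisimple, its zero-weight summand equals $T_pX_H$, and a $\TT_1$-equivariant local model (via Luna's slice theorem, or directly by lifting a weight-homogeneous basis of $\mathfrak{m}_p/\mathfrak{m}_p^2$ to a system of local parameters) gives smoothness of the expected codimension.

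For the vector bundle structure, let $I=\bigoplus_{u\in S_1\setminus\{0\}}A_u$. Because $S_1$ is pointed, $I$ is an ideal of $A$ that coincides with the defining ideal of $X_H\subset X$; reductivity of $\TT_1$ then yields the $A_0$-module splitting $A=A_0\oplus I$. The key claim is that the natural $A_0$-algebra morphism $\Sym_{A_0}(I/I^2)\to A$ is a $\TT_1$-equivariant isomorphism. Granted this, $X\to X_H$ becomes the structure map of the vector bundle $\SPEC_{X_H}(\Sym_{A_0}(I/I^2))$ over $X_H$, and since $I$ and $I^2$ are $\TT_1$-stable, the induced $\TT_1$-action on each fiber is linear. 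To verify the claim locally at $p\in X_H$, I would pick a regular system of parameters for $A_{\mathfrak{m}_p}$ consisting of weight vectors (possible because $\mathfrak{m}_p/\mathfrak{m}_p^2$ is $\TT_1$-semisimple): the zero-weight parameters pull back from a regular system on $X_H$, the nonzero-weight parameters span the fiber of $I/I^2$ at $p$, and together they show that $I/I^2$ is locally free of rank $\dim X-\dim X_H$ while the symmetric algebra map becomes an isomorphism after localization at $p$.

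The principal obstacle is to promote this pointwise isomorphism to a global one, keeping track of the $\TT_1$-equivariance. Here I would use that $\Sym_{A_0}(I/I^2)\to A$ is $M_1$-graded, so weight by weight it is a morphism of finitely generated $A_0$-modules that is an isomorphism at every closed point of $X_H$, hence an isomorphism on the nose. Equivalently, choosing a one-parameter subgroup $\lambda$ in the relative interior of the dual to $\omega\cap M_{1,\QQ}$ reduces the problem to a positively graded $\GM$-action, after which an inductive equivariant lifting of a homogeneous basis of $I/I^2$ produces the global trivializing data.
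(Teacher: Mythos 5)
Your strategy is genuinely different from the paper's, which offers no proof at all but simply cites Kambayashi--Russell \cite[Theorem~2.5]{Kam-R}; you are in effect re-proving that theorem by exhibiting $X$ as the total space of the normal bundle $\SPEC_{X_H}\bigl(\Sym(I/I^2)\bigr)$ of the smooth fixed locus $X^{\TT_1}\simeq X_H$. The skeleton is right: pointedness of the $\TT_1$-weight monoid does make $I=\bigoplus_{u\neq 0}A_u$ an ideal with $A/I\simeq A^{\TT_1}$, and smoothness of the fixed-point scheme of a torus in characteristic zero is classical. Two inaccuracies first: the base ring must be $A_H=A^{\TT_1}$, not the paper's $A_0=A^{\TT}$ (your notation conflates the two; the bundle lives over $X_H$, on which $\TT_2$ still acts), and the morphism $\Sym_{A_H}(I/I^2)\to A$ is not ``natural'': it only exists after choosing a $\TT_1$-equivariant $A_H$-linear section of $I\twoheadrightarrow I/I^2$, which in turn requires knowing that $I/I^2$ is projective (fine once local freeness is established, but the order of the steps must be made explicit).

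The genuine gap is the globalization step. ``A morphism of finitely generated $A_H$-modules that is an isomorphism at every closed point is an isomorphism'' is false if ``at every closed point'' means on fibers: the surjection $\KK[t]/(t^2)\to\KK[t]/(t)$ is an isomorphism on every fiber over $\AF^1$ but is not injective; the implication needs flatness of the target, i.e.\ flatness of $A$ over $A_H$, which is essentially what you are trying to prove. If instead you mean isomorphism after localization at closed points of $X_H$, your evidence --- a weight-homogeneous regular system of parameters at $p$ --- only controls the completed local ring of $X$ at $p$ (an equivariant formal chart), not the localization of $A$ as an $A_H$-module over a Zariski neighbourhood of $p$. What is missing is an actual proof that $\varphi:\Sym_{A_H}(I/I^2)\to A$ is bijective, and the pointedness you already isolated supplies it: choosing a linear form strictly positive on $S_1\setminus\{0\}$ shows that each fixed $\TT_1$-weight piece of $I^m$ vanishes for $m\gg0$; this gives surjectivity by a graded Nakayama-type induction ($I=s(I/I^2)A+I^2$ implies $I\subseteq s(I/I^2)A+I^{2^k}$ for all $k$) and gives $\bigcap_m I^m=0$, which combined with the isomorphism $\Sym_{A_H}(I/I^2)\xrightarrow{\ \sim\ }\mathrm{gr}_I A$ (valid because smoothness of $X$ and of $X_H$ makes $I$ locally generated by a regular sequence) yields injectivity, since $\varphi$ is filtered and injective on associated graded pieces. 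Finally, in your alternative route the phrase ``homogeneous basis of $I/I^2$'' presupposes the bundle is trivial, which is not available; work with a homogeneous generating set or with projectivity instead. With these repairs your argument becomes a complete self-contained proof; without them, the efficient course is the paper's, namely to quote \cite{Kam-R} directly.
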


In \cite{Kam-R} a fix-pointed action is called unmixed. We prefer to
call it fix-pointed since it is a notion that makes sense for
algebraic groups different from the torus. Remark that part $(1)$ of
our theorem provides a slight generalization of the above lemma in the
case where $X/\!/\TT$ is reduced to a point.

\begin{proof}[Proof of the theorem]
  To prove $(1)$, remark that $X_{H}$ admits a hyperbolic
  $\TT_2$-action and by hypothesis we have
  $\dim(X/\!/\TT)=\dim(X_{H}/\!/\TT_{2})=0$. Since for a hyperbolic
  action the dimension of the algebraic quotient equals the complexity
  we have that the $\TT_2$-action is of complexity zero and so $X_{H}$
  is a toric variety.

  Since $X$ is smooth, we have that $X_{H}$ is smooth and $X$ has the
  structure of a vector bundle over $X_H$ by the lemma.  Hence $X_H$
  is isomorphic to $(\KK^{*})^{a}\times\AF^{b}$.  By the
  generalization of the Quillen-Suslin theorem \cite{Q,S} (see
  \cite{Swan} for the generalization) any vector bundle over a ring of
  Laurent polynomial is trivial.  Thus $X$ is a smooth toric variety
  isomorphic to $(\KK^*)^{l}\times\AF^{n-l}$.

  Let us now prove the second assertion of the theorem.  Since $X$ is
  rational, we have that $X/\!/\TT$ is unirational. By hypotesis,
  $X/\!/\TT$ is a curve so we obtain that $X/\!/\TT$ is a rational
  curve. This shows that $X_{H}$ is also a rational variety. 

  Furthermore, by the lemma we have that $X_H$ is smooth. Since
  $X_{H}$ admits an hyperbolic $\TT$-action and by hypothesis
  $\dim(X/\!/\TT)=\dim(X_{H}/\!/\TT_{1})=1$, with the same argument
  above we obtain that the $\TT_2$-action on $X_H$ is of complexity
  one. Then $X_{H}$ is uniformly rational by \cite[Chapter
  4]{KKMSD}. Finally, by the lemma $X$ is a vector bundle over $X_H$
  and any vector bundle over a uniformly rational variety is uniformly
  rational (see also \cite[Example~2.1]{B-Bo}).
\end{proof}

\begin{example*}
  Let $X$ be the hypersurface in $\AF^{5}$ given by
  $$X=\{(x,y,z,t,u)\in \AF^5 \mid zty+x^{2}+y+t^{2}u=0\}\,.$$
  Let $\TT=\GM^{2}$. The variety is $\TT$-stable for the
  linear $\TT$-action on $\AF^{5}$ given by
  $$(\lambda_{1},\lambda_{2})\cdot(x,y,z,t,u)=(\lambda_{1}\lambda_{2}x,\lambda_{1}^{2}\lambda_{2}^{2}y,\lambda_{1}z,\lambda_{1}^{-1}t,\lambda_{1}^{4}\lambda_{2}^{2}u)\,.$$
  Then $X$ is a smooth and rational $\TT$-variety of complexity
  two. The algebraic quotient $X/\!/\TT$ is $\AF^{1}=\spec
  \CC[zt]$. Hence, $X$ is uniformly rational by our Theorem.
\end{example*}

\begin{remark*} 
  In the situation where $\dim(X/\!/\TT)\geq2$ it was proven by the
  second author in \cite{P} that there exist smooth and rational
  affine threefolds endowed with an hyperbolic $\GM$-action that are
  not $\GM$-uniformly rational.  For instance the hypersurface $X$ in
  $\AF^{4}$ given by
  \[
  \{(x,y,z,t)\in \AF^4\mid z^{2}y+x^{3}y^{2}+x+t^{3}=0\}.
  \]
  admits a $\GM$-action obtained by restricting the linear
  $\GM$-action on $\AF^4$ given by
  $$\lambda\cdot(x,y,z,t)=(\lambda^{3}x,\lambda^{-3}y,\lambda^{3}z,\lambda
  t)\,.$$
\end{remark*}

\end{document}